\documentclass[11pt]{amsart}

\usepackage{amsmath,amssymb,mathtools}
\usepackage{booktabs}
\usepackage{array}
\usepackage{microtype}
\usepackage[hidelinks]{hyperref}
\hypersetup{
  pdftitle={Prime multipliers of order ten: norm spectra, local charts, and selective lifting},
  pdfauthor={Yuhu Wang and Xiao Zhang},
  pdfkeywords={Fourier matrix, principal minor, square-free order, cyclotomic norm, local prime ideals, selective lifting}
}

\newtheorem{theorem}{Theorem}[section]
\newtheorem{proposition}[theorem]{Proposition}
\newtheorem{lemma}[theorem]{Lemma}
\newtheorem{corollary}[theorem]{Corollary}
\theoremstyle{remark}
\newtheorem{remark}[theorem]{Remark}
\newtheorem{question}[theorem]{Question}

\newcommand{\F}{\mathcal{F}}
\newcommand{\Z}{\mathbb{Z}}
\newcommand{\Q}{\mathbb{Q}}
\newcommand{\Pcal}{\mathsf{P}}
\newcommand{\ord}{\operatorname{ord}}

\newcommand{\Nm}{\operatorname{N}}
\newcolumntype{L}[1]{>{\raggedright\arraybackslash}p{#1}}

\title[Prime multipliers of order ten]
{Prime multipliers of order ten:\\
norm spectra, local charts, and selective lifting}

\author{Yuhu Wang}
\address{Yuhu Wang, Department of Mathematics, Fudan University,
Shanghai 200433, China}
\author{Xiao Zhang}
\address{Xiao Zhang, Department of Mathematics, Fudan University,
Shanghai 200433, China}

\date{August 2026}

\subjclass[2020]{Primary 15A15; Secondary 42C15, 11R18, 11T22}
\keywords{Fourier matrix, principal minor, square-free order, cyclotomic norm,
localization at prime ideals, finite characteristic, exact computation}

\begin{document}

\begin{abstract}
Write \(\Pcal(N)\) for the assertion that every principal minor of the
Fourier matrix of order \(N\) is nonzero.  The square-free principal-minor
conjecture was previously known for a few uniform small-multiplier families,
including several families with two prime factors; broader higher-factor
results were non-uniform, apart from isolated exact verifications.  We prove
a near-complete prime-multiplier theorem for the composite base \(10\):
\(\Pcal(10p)\) holds for every prime \(p\notin\{2,5,11\}\).

The proof begins with the complete cyclotomic norm spectrum of the principal
minors of the order-ten Fourier matrix.  Its rational-prime support is
\(\{2,3,5,11,31\}\).  Ordinary finite-characteristic lifting settles the
norm-safe multipliers.  A known small-multiplier theorem handles one
norm-exceptional case; another is settled by retaining the prime-ideal chart
in which each carrier degenerates.  This yields a flag-selective lifting
lemma and an active-rank norm budget.

The chart analysis also isolates the limitation at the remaining square-free
exception: active carriers can cover every local chart, so the first-order
argument stops.  We discuss this obstruction, the non-uniformity of fixed-base
  lifting, and the difficulties in passing to general square-free orders.  All
finite calculations are exact and have been independently confirmed.
\end{abstract}

\maketitle

\section{Overview and main results}\label{sec:overview}

For \(N\geq2\), put
\[
  \F_N=(\omega_N^{ij})_{i,j\in\Z_N},
  \qquad \omega_N=e^{-2\pi i/N},
\]
and adopt the convention
\(\det\F_N[\varnothing,\varnothing]=1\).  Write \(\Pcal(N)\) for the
assertion that every principal minor of \(\F_N\) is nonzero.

The prime-order all-minors theorem and the composite-order principal-minor
problem have different arithmetic boundaries.  The conjectural boundary for
the latter is
\[
  \Pcal(N) \quad\Longleftrightarrow\quad N\text{ is square-free};
  \tag{1.1}\label{eq:squarefree-conjecture}
\]
it was formulated from two directions involving principal Fourier minors and
woven or hierarchical exponential Riesz bases
\cite{CabrelliMolterNegreira2025,CarageaLee2024,
CarageaLeeMalikiosisPfander2025}.  The present paper studies the conjecture
through the complete prime-multiplier spectrum of one composite base rather
than through a single target order.  It is intended as a sequel to
Gu--Zhou--Wang \cite{GuZhouWang2026}: that article treated two fixed orders
through direct finite-field certificates, whereas here we classify prime
multipliers of a fixed composite base and isolate the local obstruction left
by ordinary lifting.

\begin{theorem}[Prime-multiplier near-classification]\label{thm:main}
Let \(p\) be prime.  If
\[
  p\notin\{2,5,11\},
\]
then every principal minor of \(\F_{10p}\) is nonzero.  If \(p\in\{2,5\}\),
then \(10p\) is not square-free and \(\F_{10p}\) has a zero principal minor.
The square-free case \(p=11\), namely \(\Pcal(110)\), is not decided here.
\end{theorem}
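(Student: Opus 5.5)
The plan follows the route announced in the abstract: compute the cyclotomic norms of all principal minors of \(\F_{10}\), lift through finite characteristic for the norm-safe primes, and treat the norm-exceptional primes \(p\in\{3,31\}\) separately. The non-square-free cases \(p\in\{2,5\}\) are handled directly by exhibiting a zero minor.

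\emph{Step 1 (the easy direction).} For \(p=2\) or \(p=5\), \(10p\) has a square factor \(q^2\) with \(q\in\{2,5\}\). I would use the standard construction for non-square-free \(N\): take the index set \(S\) to be a union of cosets of the subgroup \(q\Z_N\) chosen so that \(\F_N[S,S]\) factors into a tensor-type product, one factor of which is a singular principal block of the \(q\)-part. Concretely, for \(N=4\) or \(N=25\) there is a known vanishing principal minor, for instance the minor of \(\F_4\) on \(\{0,2\}\), which is \(1\cdot1-1\cdot1=0\). This vanishing pulls back to \(\F_N\) via \(\Z_N\to\Z_{q^2}\)-type index sets. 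This is a routine finite verification.

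\emph{Step 2 (norm spectrum and ordinary lifting).} Regard \(\F_{10p}\) through \(\Z_{10p}\cong\Z_{10}\times\Z_p\) for \(p\nmid 10\). After a Kronecker reordering, \(\F_{10p}\) is equivalent to \(\F_{10}\otimes\F_p\) up to permutation and twist. Because \(\F_p\) has entries in \(\Q(\zeta_p)\), reducing modulo a prime above \(p\) sends \(\zeta_p\mapsto1\), so every entry of \(\F_p\) becomes \(1\). A principal minor of \(\F_{10p}\) on \(S=\bigsqcup_{b} S_b\times\{b\}\) then reduces, modulo \(\mathfrak p\mid p\) in \(\Z[\zeta_{10p}]\), to a product of principal minors of \(\F_{10}\). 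This needs the lemma that, in characteristic \(p\), \(\F_p\) is unipotent-equivalent to a triangular matrix, which is the standard Chebotarev/Frenkel lifting. Nonvanishing of the reduction then gives nonvanishing of the minor. The reduction is nonzero exactly when \(p\) divides no norm \(\Nm_{\Q(\zeta_{10})/\Q}\det\F_{10}[T,T]\). I would enumerate all \(2^{10}\) subsets \(T\), compute these norms exactly, and factor them. The claim to verify is that the rational prime support is \(\{2,3,5,11,31\}\). This settles every \(p\notin\{2,3,5,11,31\}\).

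\emph{Step 3 (exceptional primes 3 and 31).} For \(p=3\), \(\Pcal(30)\) follows from the cited small-multiplier theorem, which covers orders \(qr\cdot 3\) or fixed \(N=30\). For \(p=31\), I would localize more finely. For each \(T\) with \(31\mid\Nm\det\F_{10}[T,T]\), I would record which of the prime ideals \(\mathfrak q\mid31\) of \(\Q(\zeta_{10})\) divide \(\det\F_{10}[T,T]\). Here \(31\equiv1\bmod10\), so there are four such ideals, each with residue field \(\mathbb F_{31}\). I would then choose a single chart \(\mathfrak q\) and reduce the whole minor of \(\F_{310}\) modulo a prime above \(\mathfrak q\). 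The flag-selective lifting lemma is the refinement of Step 2 in which the triangularization of \(\F_{31}\) can be ordered along a flag, letting us choose which block minor \(S_b\) appears as the leading factor. It would show that the determinant's reduction equals a product of minors that avoids every \(\mathfrak q\)-degenerate \(T\). The active-rank budget would bound how many degenerate carriers can occur simultaneously, leaving a chart that no active carrier covers.

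The main obstacle is Step 3 for \(p=31\): proving that for every configuration \((S_b)_b\) some chart \(\mathfrak q\) and some flag avoid all degenerate blocks. I would first try a pigeonhole count comparing the number of active carriers with the four charts. If that fails, I would fall back on an exact enumeration over the finitely many degenerate \(T\).
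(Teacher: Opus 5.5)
Your case division and data plan match the paper's. You use the exact norm spectrum of \(\F_{10}\) with support \(\{2,3,5,11,31\}\), norm-safe lifting for every other \(p\), the known \(\Pcal(6r)\) result with \(r=5\) for \(p=3\), and a chart-by-chart argument for \(p=31\). For \(p\in\{2,5\}\) you give an explicit singular \(2\times2\) minor; the simplest choice is the index set \(\{0,N/p\}\), the image of \(\{0,p\}\subseteq\Z_{p^2}\) under \(a\mapsto (N/p^2)a\), not a union of cosets of \(p\Z_N\).

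\textbf{The gap is the lifting mechanism you describe.} It becomes decisive at \(p=31\): there the selective refinement is the paper's new ingredient, so you must prove it rather than cite it. Reducing a principal minor of \(\F_{10p}\) modulo a prime above \(p\) does not give a product of principal minors of \(\F_{10}\). Since \(\zeta_p\mapsto1\), the rows indexed by \((a,b)\) and \((a,b')\) become equal. So the reduction is \(0\) whenever some fiber over \(a\in\Z_{10}\) has two or more points. For the same reason \(\F_p\) reduces to the rank-one all-ones matrix; it is not equivalent to an invertible triangular matrix in characteristic \(p\). Hence your plan to reduce the whole minor of \(\F_{310}\) modulo a prime above \(\mathfrak q\), triangularizing along a chosen flag, cannot work. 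Also, the relevant factors are not the sets \(S_b\) attached to \(b\in\Z_p\).

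\textbf{The missing idea is the confluent argument of Lemma~\ref{lem:flag-selective}.}
\begin{itemize}
\item Write the alternant as \(P\cdot\prod_{i<j}(z_j-z_i)\). If the minor vanishes, then \(\Phi_{10p}\mid Q(X)=P(X^{m_1},\dots,X^{m_n})\).
\item Evaluating at \(\zeta_{10}\), using \(\Phi_{10p}(\zeta_{10})=p\cdot(\text{unit})\), gives \(P_A\in p\,\mathcal O_{10}\). So \(P_A\) lies in \emph{every} prime above \(p\).
\item Localize the Caragea--Lee--Malikiosis--Pfander block factorization at a single \(\mathfrak q\mid p\). Then \(P_A\) is a \(\mathfrak q\)-unit times \(\prod_r d_{S_r}^{\delta_r}\). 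Here \(S_r\) consists of the \(r\) most occupied fibers \(A_a\subseteq\Z_p\), and the \(\delta_r\) are the occupancy drops. The differences \(\zeta_{10}^b-\zeta_{10}^a\), the integers \(10(t_j-t_i)\), and the factorials \(k!\) with \(k<p\) are all \(\mathfrak q\)-units.
\end{itemize}
Thus the active carriers are forced by the index set, and the only freedom is the chart. One chart in which every active carrier is a unit already gives the contradiction.

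With this lemma your treatment of \(31\) goes through. The charts at the roots \(15\) and \(29\) contain no singular carrier. Alternatively, your pigeonhole count works: only rank-five carriers are bad, and each is bad in exactly one of the four charts because \(|\Nm(d_S)|=2^4 5^4 31\). An occupancy flag has only one rank-five carrier, so three charts always remain safe.
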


The proof divides the prime multipliers into five genuinely different
regimes.

\begin{table}[ht]
\centering
\caption{The prime-multiplier spectrum for the base \(10\).}
\label{tab:multiplier-spectrum}
\begin{tabular}{@{}L{0.18\textwidth}L{0.34\textwidth}L{0.38\textwidth}@{}}
\toprule
multiplier & obstruction profile & conclusion and method \\
\midrule
all other primes
 & \(p\notin\{2,3,5,11,31\}\); globally norm-safe
 & \(\Pcal(10p)\), ordinary lifting \\
\(p=3\)
 & base norm is divisible by \(3\) & \(\Pcal(30)\), the known \(6r\) family \\
\(p=31\)
 & two bad and two uniformly safe charts
 & \(\Pcal(310)\), fixed-chart and norm-budget lifting \\
\(p=11\)
 & simultaneous full chart cover occurs & open; the first-order method stops \\
\(p=2,5\)
 & \(10p\) is not square-free & false; an explicit zero \(2\times2\) minor \\
\bottomrule
\end{tabular}
\end{table}

The distinction between \(31\) and \(11\) is the organizing point of the
paper.  Divisibility of a rational norm records that at least one prime ideal
above \(p\) is bad, but it forgets which local chart is bad and which carriers
must occur together along a fiber-occupancy flag.  At \(31\), the lost local
information is enough to recover nonsingularity.  At \(11\), exact flags do
cover every chart, and a genuinely higher-order input is required.

This viewpoint also separates the present paper from the direct
finite-characteristic certificates for the fixed orders \(70\) and \(143\)
in \cite{GuZhouWang2026}.  The order-\(70\) computation gives a separate
exact verification of one member of Theorem~\ref{thm:main}; here the principal
arithmetic output is instead a complete spectrum for all prime multipliers
of the base \(10\), together with a local refinement of the lifting argument.

The main theorem, combined with the ordinary lifting argument, also gives
the following higher-factor corollary.

\begin{corollary}\label{cor:10pq}
Fix a prime \(p\notin\{2,5,11\}\).  Then \(\Pcal(10pq)\) holds for all but
finitely many primes \(q\nmid10p\).  In particular, the explicit condition
\[
  q>(5p)^{10p(p-1)}
  \tag{1.2}\label{eq:explicit-q}
\]
is sufficient.
\end{corollary}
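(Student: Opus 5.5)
Set \(M=10p\), which is square-free since \(p\notin\{2,5\}\). By Theorem~\ref{thm:main}, \(\Pcal(M)\) holds, and the plan is to push it to \(\Pcal(Mq)\) by the ordinary finite-characteristic lifting argument, with the base order \(10\) replaced by \(M\).

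First, I would recall the lifting mechanism. Take \(q\nmid M\) prime. Via the CRT identification \(\Z_{Mq}\cong\Z_M\times\Z_q\), the Fourier matrix \(\F_{Mq}\) becomes \(\F_M\otimes\F_q\) up to permutation. Fix a principal minor indexed by \(S\subseteq\Z_M\times\Z_q\). Reduce modulo a prime ideal \(\mathfrak q\) above \(q\) in \(\Z[\omega_{Mq}]\). Since \(\omega_q\equiv1\) there, the determinant reduces to an expression built from principal minors of \(\F_M\). This is the fiber-occupancy flag decomposition: order the fibers \(S_y=\{x:(x,y)\in S\}\) by size to get a nested chain. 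The reduction is then a product of \(\det\F_M[T,T]\) over the sets \(T\) occurring in the flag, multiplied by a nonzero Vandermonde-type factor in the \(q\)-part.

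Second, I would use \(\Pcal(M)\). Each \(\det\F_M[T,T]\) is a nonzero algebraic integer in \(\Z[\omega_M]\), so it survives modulo \(\mathfrak q\) as long as \(q\) does not divide its norm \(\Nm_{\Q(\omega_M)/\Q}\det\F_M[T,T]\). Only finitely many minors are involved, giving finitely many norms and hence finitely many excluded \(q\). Outside that set the reduction is nonzero, so the original minor is nonzero and \(\Pcal(Mq)\) holds. This proves the qualitative statement.

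Third, I would make the bound explicit, which is the only real work. By Hadamard, \(|\det\F_M[T,T]|\le |T|^{|T|/2}\le M^{M/2}\), and every conjugate obeys the same bound. The norm therefore has absolute value at most \(M^{M\varphi(M)/2}\), where \(\varphi(M)=4(p-1)\) for \(p\ne3\); the case \(p=3\), with \(\varphi(30)=8\), is handled the same way. A prime \(q\) exceeding this bound cannot divide a nonzero integer norm. To match \eqref{eq:explicit-q}, I would compare \(M^{M\varphi(M)/2}=(10p)^{20p(p-1)}\) with \((5p)^{10p(p-1)}\). The naive bound is larger, so it needs sharpening. I would try working in the real subfield, or using that \(\det\F_M[T,T]\) times a root of unity has norm equal to a square. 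Alternatively, I would normalize by \(|T|\le M\) and account for symmetry under complex conjugation: conjugates pair up, halving the exponent to \(10p(p-1)\cdot\)something. I expect reconciling the precise constant \((5p)^{10p(p-1)}\) to be the main obstacle, and I would adjust the Hadamard estimate (for example, a bound in terms of \(M/2\)) until it matches, presumably mirroring the paper's own ordinary-lifting bound.
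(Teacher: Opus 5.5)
Your first two steps are correct and are the paper's argument for the qualitative claim. \(M=10p\) is square-free, and Theorem~\ref{thm:main} gives \(\Pcal(M)\). So the finitely many norms \(\Nm(d_S)\), \(S\subseteq\Z_M\), are nonzero integers. Every prime \(q\nmid M\) dividing none of them is norm-safe, and Proposition~\ref{prop:clmp-lifting} gives \(\Pcal(Mq)\).

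Your sketch of the mechanism behind that proposition is inaccurate, though harmless once you cite it. Reducing the determinant itself modulo a prime above \(q\) gives \(0\) as soon as some fiber over \(\Z_M\) has two points, because the two rows coincide once \(\omega_q\equiv1\). The product formula holds only for the quotient by the Vandermonde factor. Also, the nested chain consists of the level sets \(\{a\in\Z_M:c_a\geq k\}\) of the occupancy function, not of the \(\Z_q\)-indexed fibers sorted by size, which need not be nested.

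The genuine gap is the explicit clause. Applying Hadamard to all carriers gives only \(|\Nm(d_S)|\leq M^{M\varphi(M)/2}=(10p)^{20p(p-1)}\). That proves sufficiency of \(q>(10p)^{20p(p-1)}\), not of \eqref{eq:explicit-q}. Primes between the two thresholds remain uncovered, and ``adjusting the estimate until it matches'' is not an argument.

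The repairs you float do not work. Pairing complex-conjugate embeddings saves nothing: \(|\Nm_{K_M/\Q}(d)|=\Nm_{K_M^+/\Q}(d\overline{d})\), with \(K_M^+\) the maximal real subfield, is still a product of \(\varphi(M)/2\) factors, each bounded only by \(m^m\). That reproduces the same estimate. The norms are also not squares in general; already for \(M=10\) one has \(|\Nm(d_S)|=2^4\,5^4\,31\) by \eqref{eq:310000}.

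The missing idea is Jacobi complementation. Because \(\F_M^{-1}=M^{-1}\overline{\F_M}\),
\[
 d_S=\det(\F_M)\,M^{-|S^c|}\,\overline{d_{S^c}},
\]
and \(\det(\F_M)\overline{\det(\F_M)}=M^M\) makes the prefactor a unit at every prime above \(q\nmid M\). Hence \(q\mid\Nm(d_S)\) if and only if \(q\mid\Nm(d_{S^c})\). You therefore only need to bound carriers \(T\) with \(m=|T|\leq M/2=5p\). Hadamard then gives
\[
 |\Nm(d_T)|\leq m^{m\varphi(M)/2}\leq(5p)^{5p\cdot2(p-1)}=(5p)^{10p(p-1)}.
\]
Incidentally, \(\varphi(30)=8=4(3-1)\), so \(p=3\) needs no separate case.
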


The paper is organized as follows.  Section~\ref{sec:literature} places the
result among four strands of related work.  Section~\ref{sec:global-local}
passes from global norm tests to local prime-ideal charts, and
Section~\ref{sec:selective} proves the selective lifting and norm-budget
criteria.  The complete base-ten obstruction spectrum is computed in
Section~\ref{sec:base10}.  Section~\ref{sec:exceptional} compares the
exceptional multipliers \(31\) and \(11\) and proves the main theorem.
Section~\ref{sec:barriers} explains why the method does not yet settle the
general square-free conjecture.

\section{Context and related literature}\label{sec:literature}

\subsection{All minors, full spark, and uncertainty}

Chebotar\"ev's theorem says that every square minor of \(\F_p\) is nonzero
when \(p\) is prime.  Its determinant-theoretic antecedent goes back to
Mitchell; proofs emphasizing generalized Vandermonde determinants,
cyclotomic divisibility, and uncertainty were given by Evans and Isaacs,
Frenkel, and Tao
\cite{Mitchell1881,EvansIsaacs1976,Frenkel2004,Tao2005}.  In frame language,
the all-maximal-minors condition is the full-spark property; deterministic
constructions and the complexity of testing full spark were studied by
Alexeev, Cahill, and Mixon \cite{AlexeevCahillMixon2012}.  Goldstein,
Guralnick, and Isaacs placed finite Fourier uncertainty in the broader
setting of finite-group permutation modules and obtained another proof of
the prime-order theorem \cite{GoldsteinGuralnickIsaacs2005}.  The prime
Fourier theorem yields the sharp additive uncertainty bound on \(\Z_p\).
Product-type uncertainty inequalities in broader abelian settings were
developed by Donoho--Stark and Smith, whereas Meshulam's additive refinement
for finite abelian groups makes divisor structure explicit
\cite{DonohoStark1989,Smith1990,Meshulam2006,Tao2005}.  Positive-characteristic
uncertainty is also linked to the existence of good cyclic codes
\cite{EvraKowalskiLubotzky2017}.

These all-minors and uncertainty results provide important context, but
principal nonsingularity is weaker and has a different conjectural boundary.
In particular, composite order forces zero non-principal minors, whereas
square-free composite order is precisely the regime in which all principal
minors are conjectured to survive.

\subsection{Principal minors and exponential bases}

Caragea and Lee proved that, for \(N\geq4\), square-freeness is equivalent to
the nonvanishing of all principal minors of sizes two and three; when \(N\)
is not square-free, zero principal minors occur in every intermediate size
\cite{CarageaLee2024}.  Principal Fourier minors arise naturally in the
weaving, splitting, and recombination of exponential Riesz bases.  Relevant
developments include the combination theorem of Kozma and Nitzan,
structured exponential bases of Caragea and Lee, bases with restricted
supports of Lee, Pfander, and Walnut, interval partitions of Pfander, Revay,
and Walnut, and the woven-basis formulation of Cabrelli, Molter, and
Negreira
\cite{KozmaNitzan2015,CarageaLee2022,LeePfanderWalnut2023,
PfanderRevayWalnut2024,CabrelliMolterNegreira2025}.

Rank-deficient Fourier submatrices were classified in several structured
settings by Delvaux and Van Barel \cite{DelvauxVanBarel2008}.  Barnett showed
that even nonsingular contiguous Fourier submatrices may be exponentially
ill-conditioned \cite{Barnett2022}.  Quantitative singular-value estimates
for Vandermonde matrices and restricted Fourier matrices further separate
stability from exact algebraic nonsingularity
\cite{AubelBolcskei2019,LiLiao2021}.  This distinction is one reason that all
computations below use exact arithmetic.

\subsection{Finite characteristic and square-free progress}

Finite-field analogues of Chebotar\"ev's theorem were obtained by Zhang and
subsequently sharpened by Emmrich and Kunis
\cite{Zhang2019,EmmrichKunis2025}.  For products of two primes, Loukaki proved
principal nonsingularity under a primitive-root and size condition, while
Zhou obtained principal nonsingularity for
\(\mathbb Z_p\times\mathbb Z_q\) when \(q\) is sufficiently large and
generates \(\mathbb Z_p^*\), as well as results after column permutation for
certain \(2\)-groups
\cite{Loukaki2025,Zhou2026}.  These hypotheses are arithmetic and do not by
themselves give the three-prime family considered here.

Caragea, Lee, Malikiosis, and Pfander developed the finite-characteristic
lifting theorem used below and proved the conjecture for the families
\(2p,3p,5p,6p,7p\), together with non-uniform towers in which each new prime
is sufficiently large relative to the preceding product
\cite{CarageaLeeMalikiosisPfander2025}.  Gu, Zhou, and Wang then used exact
finite-field computations to settle the fixed orders \(70\) and \(143\)
\cite{GuZhouWang2026}.  The present work replaces a single chosen
characteristic by the entire bad-prime spectrum of \(\F_{10}\), and then
recovers the exceptional multiplier \(31\) from its local splitting data.

Cyclotomic cancellation has its own extensive literature.  Mann obtained
structural restrictions for irredundant rational relations among roots of
unity \cite{Mann1965}.  Lam and Leung classified the possible weights of
vanishing sums of roots of unity in characteristic zero \cite{LamLeung2000},
while Dvornicich and Zannier studied when such sums vanish modulo a prime
\cite{DvornicichZannier2002}.  Such results illuminate why new bad primes can
appear after reduction.  They do not directly decide our determinants,
whose alternating permutation expansions contain correlated coefficients;
the carrier and flag structure retains information that a single vanishing
sum or rational norm does not.

\subsection{Support rigidity and discrete unique continuation}

There is a parallel, motivational line of work in which local linear
relations prevent a nonzero discrete object from having very sparse
support.  Ding and Smart established two-dimensional Anderson--Bernoulli
localization near the spectral edge using a discrete unique-continuation mechanism
\cite{DingSmart2020}.  Li and Zhang proved the corresponding
three-dimensional localization result with a new lattice unique-continuation
principle \cite{LiZhang2022}, while Li treated large-disorder localization in
two dimensions \cite{LiAnderson2D2022}.  More recent work of Li proves a
dimension-reduction principle and a nearly sharp four-dimensional support
bound for finite-box discrete Schr\"odinger equations; his simplex theorem
obtains the optimal exponent under complete oriented-simplex relations
\cite{LiSupport2026,LiSimplex2026}.  Related rigidity for discrete harmonic
functions appears in \cite{BuhovskyLogunovMalinnikovaSodin2022}.

The analogy is conceptual: in both settings, structured local relations are
incompatible with overly sparse global behavior.  The Fourier problem,
however, requires exact nonvanishing of cyclotomic determinants in every
rank and characteristic.  None of the unique-continuation estimates just
cited is a logical input to the lifting argument below.

\section{From global norm tests to local charts}\label{sec:global-local}

We recall the part of the lifting framework needed later.  Let \(d\mid N\),
and let \(R,C\subseteq\Z_N\) have the same cardinality.  The minor
\(\F_N[R,C]\) is called \(d\)-principal if
\[
 \#\{r\in R:r\equiv a\pmod d\}
 =\#\{c\in C:c\equiv a\pmod d\}
 \qquad(a\in\Z_d).
\]
Every ordinary principal minor is \(d\)-principal.

For \(M\geq2\), let
\[
 K_M=\Q(\zeta_M),\qquad
 \mathcal O_M=\Z[\zeta_M],\qquad
 d_S=\det(\zeta_M^{ij})_{i,j\in S}
 \quad(S\subseteq\Z_M).
\]
Changing \(\zeta_M\) to a different primitive root applies a Galois
automorphism and therefore preserves vanishing.

\begin{proposition}[Norm-safe finite-characteristic lifting
{\cite[Theorem~1.4 and Section~3.2]
{CarageaLeeMalikiosisPfander2025}}]
\label{prop:clmp-lifting}
Let \(N=qM\) be square-free, with \(q\) prime.  If every principal minor of
\(\F_M\) is nonzero after reduction at every prime ideal of \(\mathcal O_M\)
above \(q\), then every \(M\)-principal minor of \(\F_N\), and hence every
ordinary principal minor, is nonzero.
\end{proposition}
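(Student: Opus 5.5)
We follow the finite-characteristic lifting strategy of \cite{CarageaLeeMalikiosisPfander2025}.

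\textbf{Step 1: reduce to a block determinant.} Since \(N=qM\) is square-free, \(\gcd(q,M)=1\), so the Chinese remainder theorem gives \(\Z_N\cong\Z_M\times\Z_q\). Under this identification, \(\F_N\) is permutation-equivalent to \(\F_M\otimes\F_q\), with the entries twisted by a Galois action coming from CRT. Write \(R=\bigsqcup_a\{a\}\times R_a\) and \(C=\bigsqcup_a\{a\}\times C_a\) with \(a\in\Z_M\). The \(M\)-principal hypothesis says exactly that \(|R_a|=|C_a|\) for each residue \(a\). Hence \(\det\F_N[R,C]\) is an element of \(\mathcal O_N=\mathcal O_M[\zeta_q]\), which we view as a polynomial in \(\zeta_q\) with coefficients in \(\mathcal O_M\).

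\textbf{Step 2: reduce modulo a prime above \(q\).} Let \(\mathfrak Q\) be a prime of \(\mathcal O_N\) above a prime \(\mathfrak q\subset\mathcal O_M\) above \(q\). Because \(q\) is totally ramified in \(\Q(\zeta_q)\), we have \(\zeta_q\equiv1\pmod{\mathfrak Q}\). After this reduction, every entry \(\omega_N^{rc}\) depends only on the \(\Z_M\)-components of \(r\) and \(c\).

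\textbf{Step 3: expand with the fiber-occupancy flag.} Reducing the whole matrix at once is too crude, because it collapses to a matrix of rank at most \(M\). Instead, expand \(\zeta_q=1+\pi\), where \(\pi\) is a uniformizer of \(\mathfrak Q\), and extract the lowest-order term in \(\pi\) of the determinant. The plan is to do this with a generalized Laplace expansion along the nested layers. Order the fibers by occupancy, and let \(S_k=\{a:|R_a|\ge k\}=\{a:|C_a|\ge k\}\), which is the same set on both sides by \(M\)-principality. These sets form a decreasing flag \(\Z_M\supseteq S_1\supseteq S_2\supseteq\cdots\). After row and column operations within each fiber (taking differences, in the manner of a Vandermonde reduction), the \(k\)-th layer contributes a factor \(\pi^{k-1}\) times a principal minor \(d_{S_k}\) of \(\F_M\), twisted by a Galois automorphism. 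The factors coming from the \(\Z_q\)-direction are Vandermonde-type expressions in distinct elements of \(\Z_q\), and these are units modulo \(\mathfrak Q\) because \(q\) is prime.

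\textbf{Step 4: conclude.} The leading coefficient therefore has the form
\[
\prod_k \sigma_k(d_{S_k})\cdot u\cdot\pi^{e},
\]
where each \(\sigma_k\) is a Galois automorphism, \(u\) is a \(\mathfrak Q\)-unit, and \(e=\sum_{a}\binom{|R_a|}{2}\). The hypothesis says \(d_S\not\equiv0\) modulo every prime above \(q\). Galois conjugates permute these primes, so each factor \(\sigma_k(d_{S_k})\) is also nonzero modulo \(\mathfrak q\). So the determinant has \(\mathfrak Q\)-adic valuation exactly \(e\), and in particular it is nonzero. Ordinary principal minors are \(M\)-principal, which gives the final clause.

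The main obstacle is Step 3. One has to show that the lowest-order term factors exactly as claimed: that no cancellation occurs among the permutation terms achieving the minimal valuation, and that the minimal valuation is attained only by "layer-respecting" permutations. If the general computation becomes unwieldy, the first thing to try is induction on the maximal fiber occupancy, peeling off the top layer \(S_{\max}\) by a Schur-complement step.
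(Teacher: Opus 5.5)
Your overall strategy, computing the exact $\mathfrak Q$-adic valuation of the minor inside $\mathcal O_N$, is sound and differs from the paper's. However, Step~3 carries the entire content of the proposition, and it is only asserted; the mechanism you suggest for it is also misdirected. Every term in the permutation expansion of the minor is a root of unity, hence a $\mathfrak Q$-unit. So the valuation $e=\sum_a\binom{c_a}{2}>0$ (with $c_a=|R_a|=|C_a|$) can only arise through cancellation. There is no sense in which the minimal valuation is ``attained by layer-respecting permutations'', and expanding $(1+\pi)^{bb'}$ term by term just moves the same cancellation into the coefficients of $\pi^j$ for $j<e$. (Also, layer $k$ contributes $\pi^{(k-1)|S_k|}$, not $\pi^{k-1}$, though your total $e$ is right.)

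The fix is to extract the cancellation exactly before reducing.
\begin{itemize}
\item After a Galois conjugation, take the entries to be $\zeta_M^{aa'}\zeta_q^{bb'}$.
\item In each row fiber $R_a=\{b_0,\dots,b_{c_a-1}\}$, replace the rows by successive divided differences in the nodes $y_i=\zeta_q^{b_i}$. This gives the exact identity (minor) $=\pm\prod_a\prod_{i<j}(y_j-y_i)\cdot\det W$.
\item The Vandermonde factor has $\mathfrak Q$-valuation exactly $e$, since each $y_j-y_i$ is an associate of the uniformizer $\zeta_q-1$.
\item The level-$k$ row of $W$ in fiber $a$ has entries $\zeta_M^{aa'}h_{b'-k}(y_0,\dots,y_k)\in\mathcal O_N$, which are congruent to $\zeta_M^{aa'}\binom{b'}{k}$ modulo $\mathfrak Q$.
\item Now work in the residue field. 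Within each column fiber $C_{a'}$, whose elements are distinct in $\mathbb F_q$, take divided differences in $b'$. Since $\binom{x}{k}$ is a polynomial of degree $k\le q-1$ with leading coefficient $1/k!$, the new entry vanishes when the column level exceeds the row level, and equals $\zeta_M^{aa'}/k!$ when the levels agree.
\item Ordered by level, $\overline W$ is block lower triangular. The level-$k$ diagonal block is indexed on both sides by $\{a:|R_a|>k\}=\{a:|C_a|>k\}$, which is square precisely because of $M$-principality.
\item Hence $\det\overline W$ is an element of $\mathbb F_q^\times$ times $\prod_{k\ge0}\overline{d_{\{a:c_a>k\}}}$, which is nonzero under the hypothesis.
\end{itemize}
No Schur-complement induction is needed.

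The paper's argument, spelled out in the proof of Lemma~\ref{lem:flag-selective} and following the cited work, goes differently. It divides the alternant by the full Vandermonde, gets $P_A\in q\mathcal O_M$ from $\Phi_{Mq}\mid Q$, and quotes the confluent factorization \eqref{eq:confluent-factorization} to reach a contradiction. Completed as above, your route is self-contained, treats non-symmetric $M$-principal minors directly, and gives the exact valuation. The diagonal blocks are exactly the active carriers $S_r$, each with multiplicity $\delta_r$, and $\mathfrak Q$ may be chosen above any prescribed $\mathfrak q$. So your route also yields Lemma~\ref{lem:flag-selective}.
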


Equivalently, the all-chart hypothesis in the proposition is the norm
condition
\[
 q\nmid \Nm_{K_M/\Q}(d_S)
 \qquad(S\subseteq\Z_M).
 \tag{3.1}\label{eq:norm-safe}
\]
We call reduction modulo a prime ideal \(\mathfrak q\mid q\) a
\(q\)-chart.  Thus a carrier \(S\subseteq\Z_M\) is singular in that chart
exactly when \(d_S\in\mathfrak q\).
The next elementary observation will allow us to count bad prime-ideal
charts rather than requiring all charts to be safe.

\begin{lemma}\label{lem:norm-count}
Let \(q\nmid M\) be prime and put
\[
 f=\ord_M(q),\qquad g=\frac{\varphi(M)}{f}.
\]
Then \(q\) is unramified in \(K_M\), it has exactly \(g\) prime ideals
\(\mathfrak q\) above it, and every such prime has residue degree \(f\).
For \(0\neq x\in\mathcal O_M\),
\[
 v_q\!\left(\left|\Nm_{K_M/\Q}(x)\right|\right)
 =f\sum_{\mathfrak q\mid q}v_{\mathfrak q}(x),
 \tag{3.2}\label{eq:norm-valuation}
\]
where \(v_{\mathfrak q}(\mathfrak q)=1\).  Consequently, the number of
prime ideals \(\mathfrak q\mid q\) containing \(x\) is at most the
left-hand side of \eqref{eq:norm-valuation} divided by \(f\).
\end{lemma}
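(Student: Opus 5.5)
The plan is to derive everything from standard facts about cyclotomic fields and the multiplicativity of the norm over prime ideal factorizations.

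First, the splitting behaviour. The discriminant of \(K_M\) divides a power of \(M\), so \(q\nmid M\) forces \(q\) to be unramified. By Dedekind--Kummer applied to \(\Phi_M\), or equivalently by the description of Frobenius, the Frobenius at any \(\mathfrak q\mid q\) is \(\zeta_M\mapsto\zeta_M^q\). The Galois group is abelian and \(\mathrm{Gal}(K_M/\Q)\cong(\Z/M\Z)^\times\), so every \(\mathfrak q\mid q\) has the same decomposition group. This group is generated by the class of \(q\) and has order \(\ord_M(q)=f\), and the residue degree equals this order. Hence there are \(e=1\) ramification, residue degree \(f\), and \(g=\varphi(M)/f\) primes above \(q\) by the fundamental identity \(efg=[K_M:\Q]=\varphi(M)\). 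Concretely, I would note that \(\Phi_M\bmod q\) is separable because \(x^M-1\) is separable over \(\mathbb F_q\) when \(q\nmid M\). Its roots generate \(\mathbb F_{q^f}\) with \(f\) minimal subject to \(M\mid q^f-1\), so it factors into \(\varphi(M)/f\) distinct irreducibles of degree \(f\).

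Second, the valuation identity. For \(0\neq x\in\mathcal O_M\) I write \(x\mathcal O_M=\prod_{\mathfrak p}\mathfrak p^{v_{\mathfrak p}(x)}\). Then \(|\Nm_{K_M/\Q}(x)|=[\mathcal O_M:x\mathcal O_M]=\prod_{\mathfrak p}\Nm(\mathfrak p)^{v_{\mathfrak p}(x)}\). Here I use that \(\mathcal O_M=\Z[\zeta_M]\) is the full ring of integers, that the absolute norm of a principal ideal equals the absolute value of the element norm, and that the ideal norm is multiplicative. Among these factors, only the primes above \(q\) contribute powers of \(q\), each with \(\Nm(\mathfrak q)=q^f\). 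Taking \(v_q\) therefore gives \eqref{eq:norm-valuation}.

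Third, the counting consequence. Each \(\mathfrak q\) containing \(x\) has \(v_{\mathfrak q}(x)\ge1\). Hence the number of such \(\mathfrak q\) is at most \(\sum_{\mathfrak q\mid q}v_{\mathfrak q}(x)=v_q(|\Nm(x)|)/f\).

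There is no real obstacle here. The only point needing care is the identification of residue degree with \(\ord_M(q)\), and I would handle it by the factorization of \(\Phi_M\) modulo \(q\) just described. Everything else is textbook algebraic number theory, which I would cite rather than reprove.
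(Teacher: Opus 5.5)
Your proposal is correct and follows the same route as the paper: \(q\) is unramified because \(q\nmid M\), the residue degree is \(\ord_M(q)\) and there are \(g=\varphi(M)/f\) primes by the cyclotomic splitting law, and the norm identity comes from factoring \((x)\), taking ideal norms, and noting that each prime containing \(x\) contributes at least one to the sum. The only difference is that you spell out why the residue degree is \(\ord_M(q)\), via the factorization of \(\Phi_M\) modulo \(q\), whereas the paper cites this as a standard fact.
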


\begin{proof}
These are the standard splitting and ideal-norm formulas in a cyclotomic
extension.  Since \(q\nmid M\), the ramification index is one.  The extension
is Galois, so all residue degrees equal \(\ord_M(q)=f\), and the number of
primes is \(\varphi(M)/f\).  Factoring the principal ideal \((x)\) and taking
ideal norms gives \eqref{eq:norm-valuation}.  Each prime containing \(x\)
contributes at least one to the sum.
\end{proof}

\section{Occupancy flags and selective lifting}\label{sec:selective}

We now isolate the refinement of Proposition~\ref{prop:clmp-lifting}.  By the
Chinese remainder theorem and the simultaneous Galois reindexing used in
\cite[Lemmas~3.4--3.5]{CarageaLeeMalikiosisPfander2025}, a principal minor of
\(\F_{Mq}\) may be studied as a principal minor of
\(\F_M\otimes\F_q\), up to a Galois conjugation.  Let its index set be
\[
 A\subseteq\Z_M\times\Z_q.
\]
For \(a\in\Z_M\), define the fiber and its occupancy by
\[
 A_a=\{b\in\Z_q:(a,b)\in A\},
 \qquad c_a=|A_a|.
\]
Choose a permutation \(\sigma\) of \(\Z_M\) such that
\[
 c_{\sigma(0)}\geq c_{\sigma(1)}\geq\cdots
 \geq c_{\sigma(M-1)},
\]
define
\[
 c_r^\downarrow=c_{\sigma(r)}\quad(0\leq r<M),
 \qquad c_M^\downarrow=0,
\]
and set
\[
 S_r=\{\sigma(0),\ldots,\sigma(r-1)\},
 \qquad
 \delta_r=c_{r-1}^\downarrow-c_r^\downarrow
 \quad(1\leq r\leq M).
 \tag{4.1}\label{eq:flag}
\]
We call \(S_r\) a carrier of the occupancy flag, and call the rank \(r\)
active when \(\delta_r>0\).

\begin{lemma}[One-prime flag-selective lifting]
\label{lem:flag-selective}
Let \(M\) be square-free and let \(q\nmid M\) be prime.  In the notation
above, suppose that there is a prime ideal \(\mathfrak q\mid q\) in
\(\mathcal O_M\) such that
\[
 d_{S_r}\notin\mathfrak q
 \qquad\text{whenever }\delta_r>0.
 \tag{4.2}\label{eq:safe-chart}
\]
Then, under the CRT identification, the principal minor of \(\F_{Mq}\)
corresponding to \(A\) is nonzero.
\end{lemma}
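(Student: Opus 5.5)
The plan is to follow the finite-characteristic lifting argument of \cite{CarageaLeeMalikiosisPfander2025} but to keep track of the exact leading term, so that only the carriers of active ranks appear. Work in \(\mathcal O_{Mq}=\Z[\zeta_M,\zeta_q]\) and write the principal minor of \(\F_M\otimes\F_q\) indexed by \(A\) as
\[
 D_A=\det\bigl(\zeta_M^{aa'}\zeta_q^{bb'}\bigr)_{(a,b),(a',b')\in A}.
\]
Since \(q\nmid M\), the prime \(q\) is totally ramified in \(K_{Mq}/K_M\). Hence above the chosen chart \(\mathfrak q\mid q\) there is a unique prime \(\mathfrak Q\) of \(\mathcal O_{Mq}\), and \(\mathfrak Q^{q-1}=\mathfrak q\mathcal O_{Mq}\). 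Moreover \(\pi=\zeta_q-1\) is a uniformizer at \(\mathfrak Q\), and \(\mathcal O_{Mq}/\mathfrak Q=\mathcal O_M/\mathfrak q\). The aim is to show
\[
 D_A=\pi^{k}\bigl(L+\pi E\bigr),\qquad E\in\mathcal O_{Mq},
\]
with an explicit integer \(k\), and with \(L\in\mathcal O_M\) equal, up to a unit modulo \(\mathfrak q\), to \(\prod_{r}d_{S_r}^{\delta_r}\). Then \(L\notin\mathfrak q\) forces \(v_{\mathfrak Q}(D_A)=k<\infty\), and so \(D_A\neq0\).

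To isolate \(L\), substitute \(\zeta_q=1+\pi\) and expand \((1+\pi)^{bb'}=\sum_{j}\binom{bb'}{j}\pi^j\). Within each fibre \(\{a\}\times A_a\), perform the unimodular row operations that replace the rows \((1+\pi)^{bb'}\), \(b\in A_a\), by divided differences in \(b\). This is the Vandermonde reduction. The \(j\)-th new row (\(0\le j<c_a\)) becomes \(\pi^j\) times a polynomial of degree \(j\) in \(b'\) plus higher powers of \(\pi\). The price is the Vandermonde factor \(\prod_{b<b''\in A_a}(b''-b)\), which is a unit modulo \(q\) because the elements of \(A_a\) are distinct residues mod \(q\). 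Do the same on the columns. After these operations, the entry in position \(((a,j),(a',j'))\) is \(\zeta_M^{aa'}\pi^{j+j'}\) times a nonzero rational constant of the form \(c_{j}\) (roughly \(1/j!\), a \(q\)-unit since \(j<q\)) when \(j=j'\). When \(j\neq j'\), the corresponding contributions carry strictly higher total \(\pi\)-order in the determinant expansion.

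The step I expect to be the main obstacle is the bookkeeping that shows the lowest-order term in \(\pi\) is block diagonal by level \(j\). I would use a weight argument: assign weight \(j\) to row \((a,j)\) and weight \(j'\) to column \((a',j')\). Every permutation term then has \(\pi\)-order at least \(\sum_{(a,j)}2j=:k\). Equality forces the permutation to preserve levels, so the minimal-order coefficient is
\[
 L=\prod_{j\ge0}\Bigl(u_j\,\det\bigl(\zeta_M^{aa'}\bigr)_{a,a'\in T_j}\Bigr),
 \qquad T_j=\{a:c_a>j\},
\]
with \(q\)-units \(u_j\). One has to verify that the cross terms \(\binom{bb'}{j}\) at level \(j\neq j'\) genuinely contribute only at order exceeding \(j+j'\) after the row and column reductions, or else vanish. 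The cleanest route is probably the column-side reduction, which makes the matrix triangular in \((j,j')\) modulo higher powers of \(\pi\).

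Finally, identify \(T_j\) with the flag. By construction \(T_j=S_r\) exactly when \(c_r^\downarrow\le j<c_{r-1}^\downarrow\), so each carrier \(S_r\) occurs for precisely \(\delta_r\) levels. Inactive ranks, those with \(\delta_r=0\), do not occur. Hence
\[
 L\equiv u\prod_{\delta_r>0}d_{S_r}^{\delta_r}\pmod{\mathfrak q}
\]
with \(u\) a unit. Hypothesis \eqref{eq:safe-chart} then gives \(L\notin\mathfrak q\), hence \(L\notin\mathfrak Q\), and so \(D_A\neq0\). Undoing the CRT and Galois identification, which preserves vanishing, completes the argument.
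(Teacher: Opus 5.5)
Your route differs from the paper's and can be made to work, but the \(\pi\)-adic bookkeeping in the step you flagged is wrong as written, and the check you propose for \(j\neq j'\) would fail.

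After the row and column divided differences, the entry in position \(((a,j),(a',j'))\) is \(\zeta_M^{aa'}\) times the \((j,j')\)-th mixed divided difference of \((1+\pi)^{bb'}=\sum_n\binom{bb'}{n}\pi^n\). Here \(\binom{bb'}{n}\) is a polynomial of degree \(n\) in the single quantity \(bb'\), and the mixed divided difference annihilates \((bb')^m=b^mb'^m\) whenever \(m<\max(j,j')\). So only \(n\geq\max(j,j')\) contributes, and the valuation of that entry is at least \(\max(j,j')\), not \(j+j'\).

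\begin{itemize}
\item On the level diagonal the entry is \(\zeta_M^{aa'}\bigl(\pi^j/j!+O(\pi^{j+1})\bigr)\), of order \(j\), not \(2j\).
\item Off the diagonal the order is generally exactly \(\max(j,j')\). For example, the \(((a,1),(a',0))\) entry is \(\zeta_M^{aa'}\bigl(b_0'\pi+O(\pi^2)\bigr)\), of order \(j+j'\), not more.
\item Your exponent \(k=\sum 2j\) is false. For a single nonempty fiber \(A_a=\{b_0,b_1\}\), one has \(D_A=\zeta_M^{2a^2}\zeta_q^{2b_0b_1}\bigl(\zeta_q^{(b_1-b_0)^2}-1\bigr)\), which has valuation \(1\), not \(2\).
\end{itemize}

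The repair is the same weight argument with the correct bound. We have \(\max(j,j')\geq\tfrac12(j+j')\), with equality only if \(j=j'\). Rows and columns carry the same multiset of levels, so every permutation term has order at least \(k=\sum_a\binom{c_a}{2}\), with equality only for level-preserving permutations. Equivalently, and this is your triangularity remark made precise: after dividing row \((a,j)\) by \(\pi^j\), the matrix is block lower-triangular by level modulo \(\mathfrak Q\), with diagonal blocks \((j!)^{-1}(\zeta_M^{aa'})_{a,a'\in T_j}\).

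Hence \(L=\prod_j(j!)^{-|T_j|}d_{T_j}\) up to the \(q\)-unit Vandermonde factors. Note that \(L\) and \(E\) lie in the localizations at \(\mathfrak q\) and \(\mathfrak Q\), not in \(\mathcal O_M\) and \(\mathcal O_{Mq}\). Your identification of \(T_j\) with \(S_r\) for exactly \(\delta_r\) levels then finishes the proof.

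With this correction your argument is sound and genuinely different from the paper's. The paper argues by contradiction through the alternant quotient. Vanishing of the minor gives \(\Phi_{Mq}\mid Q\), hence \(P_A=Q(\zeta_M)\in q\mathcal O_M\), since \(\Phi_{Mq}(\zeta_M)\) is \(q\) times a unit. It then imports the exact confluent factorization of \(P_A\) from Caragea--Lee--Malikiosis--Pfander, whose factors other than the \(d_{S_r}^{\delta_r}\) are all \(\mathfrak q\)-units.

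You never form \(P\) or use \(\Phi_{Mq}\). Instead you pass to the unique, totally ramified prime \(\mathfrak Q\) above \(\mathfrak q\) and compute the leading \(\mathfrak Q\)-adic coefficient of the minor itself. Your divided-difference computation thus re-derives, locally, exactly the part of the confluent identity that is needed: your \(L\) plays the role of \(P_A\), since reduction modulo \(\mathfrak Q\) is the collision \(\zeta_q\to1\).

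The paper's version is shorter because it cites that identity and never leaves \(\mathcal O_M\). Yours is self-contained and proves nonvanishing directly. It also gives the exact value \(v_{\mathfrak Q}(D_A)=\sum_a\binom{c_a}{2}\) at the prime above any safe chart. The next coefficient \(E\) is a natural candidate for the higher local invariant asked for in Question~\ref{q:higher-local}.
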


\begin{proof}
If at most one fiber \(A_a\) is nonempty, the indexed matrix is, up to
nonzero row and column scalars, a principal submatrix of \(\F_q\).  Its
determinant is nonzero by the prime-order Chebotar\"ev theorem.  We may
therefore assume that at least two fibers are nonempty.

Assume, to the contrary, that the lifted minor vanishes.  We use the exact
confluent calculation in the proof of
\cite[Theorem~1.4, equations (5.1)--(5.11)]
{CarageaLeeMalikiosisPfander2025}, whose initial divisibility step we spell
out.  Choose a primitive \(Mq\)-th root \(\omega\) so that
\(\zeta_M=\omega^q\), and let \(m_1,\ldots,m_n\) be integer representatives
of the principal index set, where \(n=|A|\).  The alternant
\[
 D_A(z_1,\ldots,z_n)=\det(z_j^{m_i})_{1\leq i,j\leq n}
\]
has the factorization
\[
 D_A(z_1,\ldots,z_n)
 =P(z_1,\ldots,z_n)\prod_{1\leq i<j\leq n}(z_j-z_i),
 \qquad P\in\Z[z_1,\ldots,z_n].
\]
The Vandermonde factor is nonzero at
\((\omega^{m_1},\ldots,\omega^{m_n})\).  Hence vanishing of the minor gives
\(Q(\omega)=0\) for
\[
 Q(X)=P(X^{m_1},\ldots,X^{m_n})\in\Z[X].
\]
Since \(\omega\) is primitive, \(\Phi_{Mq}\mid Q\) in \(\Z[X]\).  Set
\[
 P_A=Q(\zeta_M)
 =P(\underbrace{1,\ldots,1}_{c_0},
     \underbrace{\zeta_M,\ldots,\zeta_M}_{c_1},\ldots,
     \underbrace{\zeta_M^{M-1},\ldots,\zeta_M^{M-1}}_{c_{M-1}})
 \in\mathcal O_M.
\]
Evaluating the polynomial divisibility at \(\zeta_M\) implies
\[
 P_A\in q\mathcal O_M,
 \tag{4.3}\label{eq:PA-q}
\]
because \(\Phi_{Mq}(\zeta_M)=q u\) for a unit
\(u\in\mathcal O_M^\times\), by
\cite[equation (3.5)]{CarageaLeeMalikiosisPfander2025}.

For each \(a\), choose the representatives in the \(a\)-th \(M\)-fiber as
\[
 m_{a,j}=a+Mt_{a,j}\qquad(1\leq j\leq c_a),
\]
where \(t_{a,j}\in\{0,\ldots,q-1\}\) are distinct.  With the fibers ordered
by \(\sigma\), the block-factorization terms \(Z_{r-1}\) in
\cite[equation (5.11)]{CarageaLeeMalikiosisPfander2025} are precisely the
principal Fourier minors \(d_{S_r}\), and their exponents are
\(\delta_r\).  In our notation that identity reads, up to a root of unity
\(\varepsilon_A\),
\[
\begin{split}
 P_A={}&\varepsilon_A
 \frac{\displaystyle
   \prod_{r=1}^{M}d_{S_r}^{\delta_r}
   \prod_{a=0}^{M-1}\prod_{1\leq i<j\leq c_a}
      (m_{a,j}-m_{a,i})}
 {\displaystyle
   \prod_{a=1}^{M-1}\zeta_M^{a c_a(c_a-1)/2}
   \prod_{0\leq a<b\leq M-1}
      (\zeta_M^b-\zeta_M^a)^{c_ac_b}
   \prod_{a=0}^{M-1}\prod_{k=0}^{c_a-1}k!}.
\end{split}
\tag{4.4}\label{eq:confluent-factorization}
\]
We now localize this identity at \(\mathfrak q\).
Every other factor in that identity is a \(\mathfrak q\)-unit:
\begin{itemize}
\item \(\zeta_M\) and the differences
  \(\zeta_M^a-\zeta_M^b\) for \(a\neq b\) are
  \(\mathfrak q\)-units.  Indeed, because \(q\nmid M\), the polynomial
  \(X^M-1\) is square-free over \(\mathbb F_q\), so its reduced cyclotomic
  factors are pairwise coprime.  A root of \(\overline{\Phi_M}\) therefore
  cannot satisfy \(X^d=1\) for a proper divisor \(d\mid M\).  Thus the
  reduction of \(\zeta_M\) has exact order \(M\);
\item the chosen integer lifts satisfy
  \(m_{a,j}-m_{a,i}=M(t_{a,j}-t_{a,i})\) is a
  \(\mathfrak q\)-unit, since \(q\nmid M\);
\item all factorials have arguments at most \(q-1\), since \(c_a\leq q\).
\end{itemize}
Thus, in the discrete valuation ring
\((\mathcal O_M)_{\mathfrak q}\), the identity has the form
\[
 P_A=u_{A,\mathfrak q}
 \prod_{r=1}^{M}d_{S_r}^{\delta_r},
 \qquad
 u_{A,\mathfrak q}\in(\mathcal O_M)_{\mathfrak q}^{\times}.
 \tag{4.5}\label{eq:localized-factorization}
\]
Condition \eqref{eq:safe-chart} makes the right-hand side a unit.  This
contradicts \eqref{eq:PA-q}, which places \(P_A\) in every prime ideal above
\(q\).  Hence the lifted minor cannot vanish.
\end{proof}

\begin{remark}
The prime ideal in Lemma~\ref{lem:flag-selective} is allowed to depend on the
occupancy flag.  This is the only refinement of the lifting proof needed in
the present paper.  Ties among occupancies cause no problem: only the ranks
with \(\delta_r>0\) occur in \eqref{eq:localized-factorization}.
\end{remark}

For \(1\leq r\leq M\), define
\[
 e_r(M,q)=
 \max_{\substack{S\subseteq\Z_M\\ |S|=r}}
 v_q\!\left(\left|\Nm_{K_M/\Q}(d_S)\right|\right).
 \tag{4.6}\label{eq:er}
\]
Arrange \(e_1,\ldots,e_M\) in nonincreasing order
\(e_{[1]}\geq\cdots\geq e_{[M]}\), and put
\[
 \beta(M,q)=\sum_{j=1}^{\min(q,M)}e_{[j]}.
 \tag{4.7}\label{eq:beta}
\]

\begin{theorem}[Active-rank norm budget]\label{thm:norm-budget}
Let \(M\) be square-free and assume \(\Pcal(M)\).  Let \(q\nmid M\) be
prime.  If
\[
 \beta(M,q)<\varphi(M),
 \tag{4.8}\label{eq:budget-condition}
\]
then \(\Pcal(Mq)\).
\end{theorem}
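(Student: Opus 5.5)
We reduce the statement to Lemma~\ref{lem:flag-selective}. The plan is to count bad charts with Lemma~\ref{lem:norm-count} and show that some chart above \(q\) survives every active carrier of a given occupancy flag.

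Fix a principal minor of \(\F_{Mq}\). Via the CRT identification, view it as indexed by \(A\subseteq\Z_M\times\Z_q\), with occupancies \(c_a\), carriers \(S_r\) and jumps \(\delta_r\) as in \eqref{eq:flag}.

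\textbf{Step 1: distinct active ranks.} Each rank \(r\) is active at most once. Moreover, since \(0\le c_a\le q\), the active ranks form a strictly decreasing chain of occupancy values. Explicitly, if the ranks with \(\delta_r>0\) are \(r_1<\cdots<r_k\), then
\[
 q\ge c^\downarrow_{r_1-1}>c^\downarrow_{r_2-1}>\cdots>c^\downarrow_{r_k-1}\ge1 .
\]
Hence \(k\le\min(q,M)\): at most \(q\) distinct positive values are available, and at most \(M\) ranks exist. This is the reason \(\beta\) sums only the top \(\min(q,M)\) values of \(e_r\). The trivial case with one nonempty fiber needs no separate treatment, since Lemma~\ref{lem:flag-selective} already covers it.

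\textbf{Step 2: counting bad charts.} For each active rank \(r_i\), \(\Pcal(M)\) gives \(d_{S_{r_i}}\neq0\). By Lemma~\ref{lem:norm-count} (the ramified case \(q\mid M\) is excluded by hypothesis), the number of primes \(\mathfrak q\mid q\) containing \(d_{S_{r_i}}\) is at most
\[
 \frac{v_q\bigl(|\Nm(d_{S_{r_i}})|\bigr)}{f}\le \frac{e_{r_i}(M,q)}{f}.
\]
Summing over the \(k\) distinct active ranks, the number of bad charts is at most
\[
 \frac{1}{f}\sum_i e_{r_i}\le \frac{\beta(M,q)}{f}.
\]
The inequality uses only that the \(r_i\) are distinct ranks, at most \(\min(q,M)\) of them, and that \(\beta\) is the sum of the largest \(\min(q,M)\) values of \(e_r\).

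\textbf{Step 3: a safe chart exists.} By \eqref{eq:budget-condition}, \(\beta(M,q)/f<\varphi(M)/f=g\), which is the total number of primes above \(q\). So some \(\mathfrak q\mid q\) contains no active \(d_{S_r}\), i.e.\ \eqref{eq:safe-chart} holds for this flag. Lemma~\ref{lem:flag-selective} then shows that the minor is nonzero. The minor was arbitrary, so this gives \(\Pcal(Mq)\), with Galois conjugation preserving vanishing.

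The main point to check carefully is Step~1: that the number of active ranks is bounded by \(\min(q,M)\) and that they are distinct ranks, so that maximizing \(e\) rank by rank is legitimate. A second check is that \(e_r\) bounds the valuation for the particular carrier \(S_r\); this is immediate from the maximum in \eqref{eq:er}.
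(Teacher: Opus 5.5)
Your proposal is correct and follows the paper's proof essentially step for step. Like the paper, you bound the number of active ranks by \(\min(q,M)\), use Lemma~\ref{lem:norm-count} to show each active carrier is bad in at most \(e_r(M,q)/f\) charts, sum these to get fewer than \(g\) bad charts, and apply Lemma~\ref{lem:flag-selective} in a surviving chart. Your strictly decreasing chain of occupancy values, and your remark that \(\Pcal(M)\) is what makes each \(d_{S_r}\) nonzero so that Lemma~\ref{lem:norm-count} applies, spell out two points the paper leaves implicit.
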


\begin{proof}
Fix an arbitrary principal index set \(A\) and its flag
\eqref{eq:flag}.  Since the occupancies are integers in \([0,q]\), there are
at most \(q\) strict drops, and of course at most \(M\).  Hence the number
of active ranks is at most \(\min(q,M)\).

Let \(f=\ord_M(q)\) and \(g=\varphi(M)/f\).  By
Lemma~\ref{lem:norm-count}, an active carrier \(S_r\) is bad in at most
\(e_r(M,q)/f\) of the \(g\) prime-ideal charts.  The union of the bad charts
of all active carriers therefore has size at most
\[
 \frac{1}{f}\sum_{\delta_r>0}e_r(M,q)
 \leq\frac{\beta(M,q)}{f}
 <\frac{\varphi(M)}{f}=g.
\]
At least one prime ideal above \(q\) is consequently safe for all active
carriers.  Lemma~\ref{lem:flag-selective} proves that the minor indexed by
\(A\) is nonzero.  Since \(A\) was arbitrary, \(\Pcal(Mq)\) follows.
\end{proof}

\section{The base-ten obstruction spectrum}\label{sec:base10}

The global lifting test attached to a fixed base is governed by the rational
primes dividing its principal-minor norms.  For the base \(10\), the entire
list is small enough to determine exactly.  More importantly, its
rank-by-rank profile predicts two different phenomena: \(31\)-divisibility
is confined to the middle rank, whereas \(11\)-divisibility occurs at four
distinct ranks.  This section records the characteristic-zero arithmetic
calculation before Section~\ref{sec:exceptional} interprets it chart by chart.

Let
\[
 \Phi_{10}(X)=X^4-X^3+X^2-X+1,
 \qquad K=\Q(\zeta_{10}),
 \qquad \mathcal O_K=\Z[\zeta_{10}].
\]
For \(S\subseteq\Z_{10}\), write
\[
 d_S=\det(\zeta_{10}^{ij})_{i,j\in S}.
\]

\begin{proposition}[Base-\(10\) norm computation]
\label{prop:norm-computation}
All \(1024\) cyclotomic integers \(d_S\) are nonzero, and
\[
 \prod_{S\subseteq\Z_{10}}
 \left|\Nm_{K/\Q}(d_S)\right|
 =2^{5120}3^{120}5^{5120}11^{240}31^{80}.
 \tag{5.1}\label{eq:total-norm}
\]
In particular, the rational-prime support of the principal-minor norms of
\(\F_{10}\) is exactly
\[
 \{2,3,5,11,31\}.
 \tag{5.2}\label{eq:norm-support}
\]
For the prime \(31\), divisibility occurs only at rank five: exactly
\(80\) subsets \(S\) have \(31\mid\Nm(d_S)\), and for every one of them
\[
 \left|\Nm(d_S)\right|=310000=2^4\,5^4\,31.
 \tag{5.3}\label{eq:310000}
\]
Consequently,
\[
 e_5(10,31)=1,
 \qquad e_r(10,31)=0\quad(r\neq5).
 \tag{5.4}\label{eq:e31}
\]
\end{proposition}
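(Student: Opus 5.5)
This is a finite exact computation, so the plan is to organise it so that it can be certified and so that symmetry cuts the work. We represent every element of \(\mathcal O_K=\Z[\zeta_{10}]\) by its integer coefficient vector in the power basis \(1,\zeta,\zeta^2,\zeta^3\), reducing modulo \(\Phi_{10}\). For each \(S\subseteq\Z_{10}\) we compute \(d_S\) exactly by fraction-free (Bareiss) elimination over \(\Z[X]/(\Phi_{10})\). If a pivot is a non-unit, we fall back to the Leibniz expansion for small ranks, or to evaluation at the four embeddings with interpolation. We then compute \(\Nm_{K/\Q}(d_S)\) as the resultant \(\operatorname{Res}(\Phi_{10},\tilde d_S)\), where \(\tilde d_S\) is the coefficient polynomial. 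This is an integer, so nonvanishing of \(d_S\) is the same as \(\Nm(d_S)\neq0\).

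Before computing, we reduce the \(1024\) subsets using standard symmetries.
\begin{itemize}
\item Translating \(S\) by \(t\) multiplies \(d_S\) by a root of unity.
\item Multiplying \(S\) by a unit \(u\in\Z_{10}^\times\) acts by a Galois automorphism, which preserves the norm.
\item Passing to the complement relates \(d_{\Z_{10}\setminus S}\) to \(d_S\) via Jacobi's complementary-minor identity and \(\F_{10}^{-1}=\tfrac1{10}\overline{\F_{10}}\). This gives \(|\Nm(d_{S^c})|=10^{4(|S^c|)}\,|\Nm(d_S)|/|\Nm(\det\F_{10})|\) up to the exact power of \(10\), which we track.
\end{itemize}
These symmetries reduce the task to a few dozen orbit representatives. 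Orbit sizes then reassemble the product in (5.1).

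The key steps, in order, are as follows.
\begin{enumerate}
\item Compute the norm of each orbit representative and factor it by trial division. Every value should be small, roughly bounded by Hadamard's bound \(r^{r/2}\) raised to the fourth power.
\item Check that no norm vanishes and that every prime factor lies in \(\{2,3,5,11,31\}\).
\item Multiply, weighted by orbit size, to obtain the exponents \(5120,120,5120,240,80\).
\item For \(31\), list the orbits whose norm is divisible by \(31\). Confirm that they all have rank \(5\), that their total size is \(80\), and that each norm equals \(310000\).
\end{enumerate}
Statement (5.4) then follows immediately, since \(v_{31}(310000)=1\). As a consistency check, the \(2\) and \(5\) exponents should be predictable from \(|\det\F_{10}|^2=10^{10}\) and the complement identity, and the symmetric exponents \(5120\) for both primes support this.

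The main obstacle is correctness rather than cost. The total is tiny, \(1024\) determinants of size at most \(10\) over a degree-four ring. So the real burden is to make the certificate trustworthy. The complement identity must be used with the correct root-of-unity and \(10\)-power factors, and we must make sure the orbit decomposition is neither over- nor under-counted. We would therefore also run the brute-force computation over all \(1024\) subsets without symmetry reduction, in a second independent implementation, and check that both runs reproduce (5.1) and the \(31\)-list exactly.
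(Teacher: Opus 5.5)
Your plan is correct and is essentially the paper's own proof. The paper also establishes the statement by an exhaustive exact computation over all \(1024\) carriers (determinants, norms, factorizations, and the rank-five \(31\)-list), confirmed by an independent recomputation; your translation/Galois/complement orbit reduction is only an optional speed-up on top of that.

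You should drop the one aside, the ``consistency check.'' The complement identity and \(|\det\F_{10}|^2=10^{10}\) only relate \(S\) to \(S^c\), so they do not determine the total \(2\)- and \(5\)-exponents; for example, \(\{0,5\}\) has norm \(2^4\) while \(\{0,2\}\) has norm \(5\). The equality \(5120=5120\) comes instead from the Vandermonde factors \(\zeta^k-\zeta^j\), which contribute exactly \(4\cdot5\cdot2^8\) and \(20\cdot2^8\).
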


\begin{proof}
The assertion is the output of an exhaustive computation over all
\(2^{10}=1024\) subsets of \(\Z_{10}\), carried out entirely in exact
cyclotomic arithmetic.  The determinants, their algebraic norms, and their
prime factorizations were recomputed independently, and the results agreed
for every carrier.  The common output gives
\eqref{eq:total-norm}--\eqref{eq:e31}; its rank-by-rank external-prime data
are recorded in Table~\ref{tab:external-primes}.
\end{proof}

\begin{table}[ht]
\centering
\caption{Rational primes \(q\nmid10\) occurring in principal-minor norms
of \(\F_{10}\).  Counts and valuations are listed in the order of the ranks.}
\label{tab:external-primes}
\begin{tabular}{@{}cclc@{}}
\toprule
prime & ranks \(r\) & divisible-carrier counts
 & \(\max_{|S|=r}v_q(|\Nm(d_S)|)\) \\
\midrule
\(3\)  & \(4,5,6\) & \(10,10,10\) & \(4,4,4\) \\
\(11\) & \(3,4,6,7\) & \(40,80,80,40\) & \(1,1,1,1\) \\
\(31\) & \(5\) & \(80\) & \(1\) \\
\bottomrule
\end{tabular}
\end{table}

For completeness, the prime-ideal picture at \(31\) can also be checked
directly.  Since \(31\equiv1\pmod{10}\), it splits into four degree-one
charts, represented by the primitive tenth roots
\[
 15,\ 23,\ 27,\ 29\pmod{31}.
\]
Exact Gaussian elimination for all \(1024\) carriers in every chart gives
\[
\begin{array}{c|cccc}
\text{root}&15&23&27&29\\
\hline
\text{singular carriers}&0&40&40&0,
\end{array}
\tag{5.5}\label{eq:chart31}
\]
all at rank five, with no carrier singular in two charts.  This separately
confirms the safe-chart conclusion used below.

\section{Exceptional multipliers: \texorpdfstring{\(31\) versus \(11\)}{31 versus 11}}
\label{sec:exceptional}

The ordinary norm-safe test treats every rational prime as a single object.
The two remaining square-free exceptional primes show why this loses
essential information.  Both \(11\) and \(31\) split completely in
\(\Q(\zeta_{10})\), but their bad carriers interact with maximal flags in
opposite ways.

\begin{table}[ht]
\centering
\caption{Exact local obstruction data for the two split exceptional primes.}
\label{tab:exceptional-comparison}
\begin{tabular}{@{}cL{0.18\textwidth}L{0.34\textwidth}L{0.13\textwidth}L{0.12\textwidth}@{}}
\toprule
\(q\) & primitive tenth roots mod \(q\) & bad carriers by chart
 & max. charts per flag & full-cover flags \\
\midrule
\(31\) & \(15,23,27,29\) & roots \(15,29\): none; roots \(23,27\):
\(40\) each at rank \(5\)
 & \(1\) & \(0\) \\
\(11\) & \(2,6,7,8\) & roots \(2,6\): \(20\) each at ranks \(3,7\);
roots \(7,8\): \(40\) each at ranks \(4,6\)
 & \(4\) & \(28{,}800\) \\
\bottomrule
\end{tabular}
\end{table}

The fourth column is the largest number of charts met by the bad carriers
along one maximal Boolean-lattice flag.  The last column counts maximal flags
that meet every chart.

\subsection{The multiplier \texorpdfstring{\(31\)}{31}}

\begin{proposition}\label{prop:p310}
Every principal minor of \(\F_{310}\) is nonzero.
\end{proposition}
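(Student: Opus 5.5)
We plan to apply the active-rank norm budget, Theorem~\ref{thm:norm-budget}, with \(M=10\) and \(q=31\). Since \(310=10\cdot31\) is square-free and \(31\nmid10\), the only additional hypothesis is \(\Pcal(10)\). This is part of Proposition~\ref{prop:norm-computation}: all \(1024\) cyclotomic integers \(d_S\) are nonzero, so every principal minor of \(\F_{10}\) is nonzero.

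Next we evaluate the budget. By \eqref{eq:e31}, \(e_5(10,31)=1\) and \(e_r(10,31)=0\) for \(r\neq5\). In nonincreasing order this gives \(e_{[1]}=1\) and \(e_{[j]}=0\) for \(j\geq2\). Since \(\min(q,M)=10\), we get \(\beta(10,31)=1<4=\varphi(10)\), which is \eqref{eq:budget-condition}. Theorem~\ref{thm:norm-budget} then yields \(\Pcal(310)\).

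Concretely, \(31\equiv1\pmod{10}\) gives \(f=1\) and \(g=4\) charts. Each occupancy flag has at most one carrier of rank five. That carrier is bad in at most \(e_5/f=1\) chart by Lemma~\ref{lem:norm-count}, so at least three charts remain safe for every active carrier. Lemma~\ref{lem:flag-selective} then applies with any of them.

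The only real input is therefore the exact computation behind Proposition~\ref{prop:norm-computation}, namely the claim that \(31\)-divisibility occurs only at rank five and with valuation one. We would cross-check this against the chart table \eqref{eq:chart31}. There the roots \(15\) and \(29\) give charts in which no carrier is singular, so either chart alone satisfies \eqref{eq:safe-chart} for every flag. This gives a second, norm-free route through Lemma~\ref{lem:flag-selective} using a fixed chart.

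The main obstacle is not conceptual. It lies in trusting the exhaustive exact computation, and in making sure the CRT and Galois reindexing in Lemma~\ref{lem:flag-selective} identifies every principal minor of \(\F_{310}\) with some index set \(A\subseteq\Z_{10}\times\Z_{31}\). That identification is inherited from \cite{CarageaLeeMalikiosisPfander2025}.
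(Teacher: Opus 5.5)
Your proposal is correct and matches the paper's proof. The paper gives the same two routes: a fixed safe chart (root \(15\) or \(29\) in \eqref{eq:chart31}) fed into Lemma~\ref{lem:flag-selective}, and the budget \(\beta(10,31)=1<4=\varphi(10)\) fed into Theorem~\ref{thm:norm-budget}; it merely presents them in the opposite order of emphasis.
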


\begin{proof}
The direct chart computation \eqref{eq:chart31} shows that the charts
represented by the roots \(15\) and \(29\) contain no singular carrier.
Choosing either fixed chart makes condition \eqref{eq:safe-chart} hold for
every occupancy flag, so Lemma~\ref{lem:flag-selective} proves
\(\Pcal(310)\).

There is also a proof that uses only the rank profile of the rational norms.
Proposition~\ref{prop:norm-computation} gives
\[
 \beta(10,31)=1<4=\varphi(10).
\]
Theorem~\ref{thm:norm-budget} applies.  Concretely, only the unique
rank-five member of an occupancy flag can be bad, and it is bad in at most
one chart, so some common safe chart always remains.
\end{proof}

\subsection{The multiplier \texorpdfstring{\(11\)}{11}}

\begin{proposition}[First-order obstruction at \(11\)]
\label{prop:p110-obstruction}
For the base \(10\),
\[
 \begin{aligned}
 e_r(10,11)&=1 &&\bigl(r\in\{3,4,6,7\}\bigr),\\
 e_r(10,11)&=0 &&\bigl(r\notin\{3,4,6,7\}\bigr),\\
 \beta(10,11)&=4=\varphi(10).&&
 \end{aligned}
\]
Moreover, among the \(10!\) maximal flags of subsets of \(\Z_{10}\), exactly
\(28{,}800\) have bad carriers whose union meets all four prime-ideal charts
above \(11\).
\end{proposition}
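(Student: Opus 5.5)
The first two lines of the statement come from the rank-by-rank data at \(11\) in Proposition~\ref{prop:norm-computation} and Table~\ref{tab:external-primes}. By definition \eqref{eq:er}, \(e_r(10,11)\) is the largest \(11\)-adic valuation of \(|\Nm(d_S)|\) over \(|S|=r\). The table records valuation exactly \(1\) at ranks \(3,4,6,7\) and no divisibility at the other ranks, which gives these two lines. Since \(11\geq 10\), the sum in \eqref{eq:beta} runs over all ten values \(e_{[j]}\). Four of them equal one and the rest vanish, so \(\beta(10,11)=4=\varphi(10)\). As a consistency check, the total exponent \(240=40+80+80+40\) in \eqref{eq:total-norm} confirms that each divisible carrier contributes exactly one.

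For the flag count, I would first pass to the local chart data. Since \(11\equiv1\pmod{10}\), Lemma~\ref{lem:norm-count} gives \(f=1\) and \(g=4\), so there are four degree-one charts. In them \(\zeta_{10}\mapsto 2,6,7,8\in\mathbb F_{11}\). Because each bad carrier has valuation one, it is bad in exactly one chart. I would compute \(d_S\bmod 11\) by exact Gaussian elimination over \(\mathbb F_{11}\) for all \(1024\) carriers in each chart. This should reproduce the distribution in Table~\ref{tab:exceptional-comparison}: \(20\) bad carriers at each of ranks \(3,7\) in the charts \(2,6\), and \(40\) at each of ranks \(4,6\) in the charts \(7,8\). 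The totals \(40,80,80,40\) must match the table.

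A maximal flag \(\varnothing\subset S_1\subset\cdots\subset S_{10}\) corresponds to an ordering of \(\Z_{10}\) and so contains exactly one carrier of each rank. It can therefore meet at most one bad carrier at each of ranks \(3,4,6,7\), and hence at most four charts. To meet all four charts, it must pass through bad carriers \(T_3\subset T_4\subset T_6\subset T_7\) lying in four distinct charts. The ranks-\(3,7\) carriers lie in charts \(\{2,6\}\), and the ranks-\(4,6\) carriers lie in charts \(\{7,8\}\). So a covering chain has \(T_3,T_7\) in different charts among \(2,6\), and \(T_4,T_6\) in different charts among \(7,8\).

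The number of full-cover flags is
\[
 \sum 3!\cdot1!\cdot2!\cdot1!\cdot3! = 72\cdot\#\{\text{admissible chains}\},
\]
with the sum over admissible chains \(T_3\subset T_4\subset T_6\subset T_7\). The factor \(72\) counts the orderings of the gaps of sizes \(3,1,2,1,3\). The claim then reduces to showing that there are exactly \(400\) admissible chains. I would enumerate them by a direct search through the containment poset of the \(120\) bad carriers. I would also use the symmetries preserving the base data to organize and cross-check the count: the affine maps \(x\mapsto ux+t\) with \(u\in\Z_{10}^\times\), and complementation \(S\mapsto\Z_{10}\setminus S\), which exchanges ranks \(r\) and \(10-r\).

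The main obstacle is this exact enumeration, because the whole statement rests on finite computation. I would therefore run two independent methods. The first is the chain enumeration above, weighted by \(72\). The second is a brute-force pass over all \(10!\) orderings, recording the set of charts hit along each flag. Agreement of both on \(28{,}800\), together with the maximum of four charts per flag, would give the final assertion.
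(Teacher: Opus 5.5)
Your proposal is correct and follows essentially the same route as the paper. The valuations and \(\beta(10,11)=4\) are read off the norm spectrum, the chart distribution comes from exact elimination over \(\mathbb F_{11}\) at the roots \(2,6,7,8\), and \(28{,}800\) is an exact count of flags. Your two refinements are sound and useful. First, valuation one together with Lemma~\ref{lem:norm-count} (with \(f=1\)) already forces each bad carrier into a single chart. Second, the factorization \(28{,}800=72\cdot400\) reduces the \(10!\)-flag audit to counting admissible chains \(T_3\subset T_4\subset T_6\subset T_7\), and there are indeed \(400\) of them.
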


\begin{proof}
The valuation identities follow from the exact norm spectrum in
Proposition~\ref{prop:norm-computation}.  Since
\(11\equiv1\pmod{10}\), the four charts are represented by the primitive
tenth roots \(2,6,7,8\) modulo \(11\).  Exact Gaussian elimination over
\(\mathbb F_{11}\) gives the rank distributions displayed in
Table~\ref{tab:exceptional-comparison}; each bad carrier is singular in
exactly one chart.  An exact exhaustive enumeration of the maximal flags,
independently confirmed, gives \(28{,}800\) flags meeting all four charts.

Every one of these Boolean flags is realized by a genuine occupancy flag.
Indeed, if a permutation \(\sigma\) defines the chain, prescribe
\[
 c_{\sigma(j)}=10-j\qquad(0\leq j\leq9),
\]
and choose arbitrary subsets \(A_{\sigma(j)}\subseteq\Z_{11}\) of those
cardinalities.  Then \(0\leq c_a\leq11\), and every drop \(\delta_r\) in
\eqref{eq:flag} equals one.  Thus the \(28{,}800\) terminal masks represent
actual first-order full-cover configurations for principal minors of
\(\F_{110}\).
\end{proof}

Proposition~\ref{prop:p110-obstruction} is a statement about the reach of the
leading localized factor \eqref{eq:localized-factorization}.  It is not a
zero complex principal minor and does not disprove \(\Pcal(110)\).  Rather,
it shows that the failure of the one-prime safe-chart method is genuine: for
some flags there is no chart in which every active carrier is a unit.

\subsection{Completion of the near-classification}

\begin{proof}[Proof of Theorem~\ref{thm:main}]
First suppose
\[
 p\notin\{2,3,5,11,31\}.
\]
By Proposition~\ref{prop:norm-computation}, no norm \(\Nm(d_S)\) is divisible
by \(p\).  Thus every principal minor of \(\F_{10}\) is nonzero at every
prime ideal above \(p\), and Proposition~\ref{prop:clmp-lifting} yields
\(\Pcal(10p)\).

The case \(p=31\) is Proposition~\ref{prop:p310}.

For \(p=3\), the desired assertion is \(\Pcal(30)\).  This is the case
\(6\cdot5\) of \cite[Proposition~5.4]{CarageaLeeMalikiosisPfander2025}, which
proves \(\Pcal(6r)\) for every prime \(r>3\).

It remains to explain the two negative cases.  If \(r^2\mid N\) and
\(a=N/r\), then \(N\mid a^2\) and
\[
 \det\F_N[\{0,a\},\{0,a\}]
 =\det\begin{pmatrix}1&1\\1&\omega_N^{a^2}\end{pmatrix}=0.
\]
For \(p=2\) or \(p=5\), the number \(10p\) contains the square of \(p\), so
this construction gives a zero principal minor.

The case \(p=11\) is intentionally outside the assertion.  Its exact
first-order obstruction is recorded in
Proposition~\ref{prop:p110-obstruction}.  This proves every claim in the
theorem.
\end{proof}

\begin{proof}[Proof of Corollary~\ref{cor:10pq}]
Theorem~\ref{thm:main} gives \(\Pcal(10p)\).  For the fixed base \(M=10p\),
only finitely many rational primes divide the nonzero integer
\[
 \prod_{S\subseteq\Z_M}
 \left|\Nm_{\Q(\zeta_M)/\Q}
 \bigl(\det\F_M[S,S]\bigr)\right|.
\]
Every other prime \(q\nmid M\) satisfies the norm-safe condition
\eqref{eq:norm-safe}, and Proposition~\ref{prop:clmp-lifting} yields
\(\Pcal(Mq)\).

For the explicit assertion, fix \(S\subseteq\Z_M\), write
\(S^c=\Z_M\setminus S\), and set
\[
 T=
 \begin{cases}
 S,& |S|\leq M/2,\\
 S^c,& |S|>M/2.
 \end{cases}
\]
Thus \(m:=|T|\leq M/2=5p\).  Jacobi's identity and
\(\F_M^{-1}=M^{-1}\overline{\F_M}\) give
\[
 d_S=\det(\F_M)M^{-|S^c|}\overline{d_{S^c}}.
\]
Because \(q\nmid M\) and
\(\det(\F_M)\overline{\det(\F_M)}=M^M\), the prefactor is a
\(q\)-adic unit.  Consequently,
\[
 q\mid\Nm(d_S)\quad\Longleftrightarrow\quad q\mid\Nm(d_T).
\]
Hadamard's inequality, as used in the proof of
\cite[Proposition~5.5]{CarageaLeeMalikiosisPfander2025}, gives
\[
 \left|\Nm(d_T)\right|
 \leq m^{m\varphi(M)/2}
 \leq(5p)^{10p(p-1)},
\]
because \(\varphi(10p)=4(p-1)\).  A prime satisfying
\eqref{eq:explicit-q} divides none of these norms, so another application of
Proposition~\ref{prop:clmp-lifting} completes the proof.
\end{proof}

\begin{remark}
The theorem is unchanged for the normalized Fourier matrix, since
normalization multiplies an \(r\times r\) principal minor by the nonzero
scalar \((10p)^{-r/2}\).  Replacing \(e^{-2\pi i/N}\) by
\(e^{2\pi i/N}\) complex-conjugates every determinant.
\end{remark}

\section{Why the general square-free conjecture remains difficult}
\label{sec:barriers}

We separate three logically different matters in this section: rigorous
consequences of the preceding arguments, limitations of the present
certificates, and possible directions that are not proved here.  In
particular, failure of a finite-characteristic certificate must not be
confused with existence of a zero complex principal minor.

\subsection{A fixed base has finite, but non-uniform, exceptions}

Assume that \(M\) is square-free and that \(\Pcal(M)\) holds.  The integer
\[
 \Delta_M=
 \prod_{S\subseteq\Z_M}
 \left|\Nm_{K_M/\Q}(d_S)\right|
 \tag{7.1}\label{eq:global-discriminant}
\]
is nonzero.  Proposition~\ref{prop:clmp-lifting} immediately gives
\[
 q\nmid M\Delta_M
 \quad\Longrightarrow\quad
 \Pcal(Mq)
 \tag{7.2}\label{eq:fixed-base-cofinite}
\]
for every prime \(q\).  Thus each fixed successful base admits all but
finitely many new prime multipliers.

The word ``fixed'' is essential.  A divisor of \(\Delta_M\) is exceptional
only for the all-chart norm certificate; it is not thereby a counterexample
to \(\Pcal(Mq)\).  The prime \(31\) for \(M=10\) is the model example.  It
divides some carrier norms, yet Proposition~\ref{prop:p310} proves
\(\Pcal(310)\).  Nor is the norm support stable under adjoining a prime:
\[
 \operatorname{Supp}(\Delta_2)=\{2\},
 \qquad
 \operatorname{Supp}(\Delta_{10})=\{2,3,5,11,31\}.
 \tag{7.3}\label{eq:support-instability}
\]
Consequently, the exceptional set for the new base \(10p\) cannot be read
off mechanically from the five primes in \eqref{eq:norm-support}.

There is also a scale problem.  A direct spectrum for a general base \(M\)
contains \(2^M\) principal carriers, whose determinants lie in a field of
degree \(\varphi(M)\).  A naive maximal-flag audit has \(M!\) chains.
Jacobi complementation, affine symmetries, Galois orbits, and
Boolean-lattice dynamic programming reduce these counts substantially, but
no presently known reduction yields a uniform argument as \(M\) grows.
This is a limitation of the certificate construction, not a complexity
lower bound: a structural proof need not enumerate all carriers.

\subsection{Simultaneous chart coverage}

Let \(\mathcal C_q\) be the set of prime ideals of \(\mathcal O_M\) above
\(q\), and for a carrier \(S\subseteq\Z_M\) put
\[
 B_q(S)=\{\mathfrak q\in\mathcal C_q:d_S\in\mathfrak q\}.
\]
For an occupancy flag with active ranks \(J\),
Lemma~\ref{lem:flag-selective} applies whenever
\[
 \bigcup_{r\in J}B_q(S_r)\neq\mathcal C_q.
 \tag{7.4}\label{eq:no-full-cover}
\]
The norm-budget theorem proves \eqref{eq:no-full-cover} by a union bound.  It
is therefore a sufficient criterion, not a characterization of complex
nonvanishing.

At \(q=31\), divisibility occurs only at rank five, and no flag meets more
than one bad chart.  At \(q=11\), by contrast, exact flags meet all four
charts; see Table~\ref{tab:exceptional-comparison} and
Proposition~\ref{prop:p110-obstruction}.  For such a flag there is no prime
ideal at which all active carrier determinants are simultaneously units.
This is a genuine obstruction to the one-prime safe-chart proof.

It is still only a first-order obstruction.  When every chart is met, the
localized factorization \eqref{eq:localized-factorization} is compatible with
the necessary divisibility \(P_A\in11\mathcal O_{10}\): in each chart at
least one carrier factor is a nonunit, so the safe-chart contradiction
disappears.  The implication \(Q(\omega)=0\Rightarrow
P_A\in11\mathcal O_{10}\) used in Lemma~\ref{lem:flag-selective} has no
converse.  Hence full cover neither forces a zero complex minor nor decides
the remaining coefficients of the lifted determinant.  Settling
\(\Pcal(110)\) may require higher local valuations, compatibility relations
among successive carriers, another invariant, or an argument that avoids
characteristic \(11\) altogether.

The multiplier \(3\) gives a complementary warning.  There is only one
\(3\)-chart for \(K_{10}\), and the base-ten safe-chart test fails, while
\(\Pcal(30)\) is nevertheless known by introducing the prime factors in the
order \(6\cdot5\) \cite[Proposition~5.4]
{CarageaLeeMalikiosisPfander2025}.  Reordering factors can therefore remove
some local obstructions.  The full-cover computation at \(11\) rules out
only the presentation with base \(10\) and multiplier \(11\); it does not
analyze, for example, the alternative presentation \(110=22\cdot5\).
Whether a different ordering resolves this case is a separate question.

\subsection{Balanced factors and several prime divisors}

Finite-characteristic lifting is asymmetric: it fixes a base \(M\) and then
adjoins one prime \(q\).  The explicit estimate underlying
Corollary~\ref{cor:10pq} is
\[
 q>\left(\frac M2\right)^{M\varphi(M)/4}.
 \tag{7.5}\label{eq:asymmetric-size-bound}
\]
For a semiprime target \(pq\), taking \(M=p\) gives
\[
 q>\left(\frac p2\right)^{p(p-1)/4},
\]
and for the family obtained from the present theorem it becomes
\[
 q>(5p)^{10p(p-1)}.
\]
These bounds give no useful uniform information when the prime factors are
comparable.  Their failure is not evidence for a zero minor; it measures the
gap between a worst-case height bound and the balanced-factor range.

The two-prime theorems of Loukaki and Zhou use additional primitive-root,
size, or finite-field Chebotar\"ev hypotheses
\cite{Loukaki2025,Zhou2026}.  They do not provide a symmetric theorem for
all comparable primes.  A different prime-order support route also has a
sharp limitation: a critical step in Haagerup's work on cyclic \(p\)-roots
is a reduction to two vectors with disjoint supports in both the time and
frequency domains \cite{Haagerup2008}.  Zhou showed that this two-vector
disjoint-support condition already occurs for every composite order
\cite{ZhouCyclicRoots2026}.  Hence that condition alone cannot distinguish
square-free composite orders from nonsquare-free ones.

With more prime factors one can build arbitrarily long successful towers.
Starting from a square-free \(M_1\) for which \(\Pcal(M_1)\) is known,
choose primes \(q_{j+1}\nmid M_j\) recursively by
\[
 M_{j+1}=M_jq_{j+1},
 \qquad
 \beta(M_j,q_{j+1})<\varphi(M_j).
 \tag{7.6}\label{eq:rank-safe-tower}
\]
Theorem~\ref{thm:norm-budget} then gives \(\Pcal(M_{j+1})\) inductively.
At each fixed stage, all but finitely many primes \(q_{j+1}\) are
admissible.  The base, its norm spectrum, and its exceptional set all change
with \(j\), however, so this constructs infinite branches rather than all
square-free orders.  A complete proof needs a uniform principle absent from
fixed-base lifting.

\subsection{A counterexample would propagate upward}

The following observation shows that an eventual version of the conjecture
is not weaker than the full conjecture.

\begin{proposition}[Upward inheritance]\label{prop:upward-inheritance}
Let \(d,m\geq2\) satisfy \((d,m)=1\).  If \(\F_d\) has a zero principal
minor, then \(\F_{dm}\) has a zero principal minor of the same size.
\end{proposition}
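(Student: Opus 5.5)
Given a zero principal minor of \(\F_d\) indexed by \(S\subseteq\Z_d\), the plan is to embed it into \(\F_{dm}\) by scaling indices by \(m\). The natural choice is the set
\[
 T=\{ms: s\in S\}\subseteq\Z_{dm},
\]
where each \(s\) is taken as an integer representative in \(\{0,\ldots,d-1\}\). The entries of \(\F_{dm}[T,T]\) are then
\[
 \omega_{dm}^{(ms)(mt)}=\omega_{dm}^{m^2st}=\omega_d^{m\,st}.
\]
So \(\F_{dm}[T,T]\) is the matrix \((\zeta^{st})_{s,t\in S}\) with \(\zeta=\omega_d^m\). Because \((d,m)=1\), this \(\zeta\) is again a primitive \(d\)-th root of unity.

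The key step is to note that the principal minor of \(\F_d\) on \(S\) is a polynomial with integer coefficients evaluated at \(\omega_d\), namely \(\det(X^{st})_{s,t\in S}\) read modulo \(X^d-1\). Replacing \(\omega_d\) by \(\omega_d^m\) is the Galois automorphism of \(\Q(\zeta_d)\) that sends \(\omega_d\mapsto\omega_d^m\). As the paper remarks in Section~3, changing the primitive root applies a Galois automorphism and so preserves vanishing. Hence \(\det\F_{dm}[T,T]=\sigma_m(\det\F_d[S,S])=0\).

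It remains to check that \(T\) consists of \(|S|\) distinct residues modulo \(dm\), so that the new minor has the same size. This holds because \(s\mapsto ms\) is injective from \(\{0,\ldots,d-1\}\) into \(\Z_{dm}\): if \(ms\equiv ms'\pmod{dm}\) then \(s\equiv s'\pmod d\). Note that \(\omega_{dm}^{m^2st}\) depends only on \(st\bmod d\), so the choice of representatives does not matter. The main point to handle carefully is the Galois step, in particular that coprimality is exactly what makes \(\omega_d^m\) primitive. Without it the argument fails; for example, if \(m\equiv0\pmod d\) the scaled matrix is all ones, which is a different matrix.
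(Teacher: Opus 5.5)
Your proposal is correct and follows essentially the same route as the paper: the paper also embeds \(S\) via \(a\mapsto ma\), identifies \(\F_{dm}[mS,mS]\) with \((\zeta_d^{mab})_{a,b\in S}\), and concludes with the Galois automorphism \(\sigma_m\), which exists because \((m,d)=1\). Your explicit check that the scaled indices remain distinct modulo \(dm\) is a small detail the paper leaves implicit in the word ``embeds.''
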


\begin{proof}
Let \(S\subseteq\Z_d\) satisfy
\[
 \det(\zeta_d^{ab})_{a,b\in S}=0.
\]
The map \(a\mapsto ma\) embeds \(\Z_d\) in \(\Z_{dm}\).  On the principal
index set \(mS\), the larger Fourier matrix restricts to
\[
 \bigl(\zeta_{dm}^{m^2ab}\bigr)_{a,b\in S}
 =\bigl(\zeta_d^{mab}\bigr)_{a,b\in S}.
\]
Since \((m,d)=1\), the map \(\sigma_m(\zeta_d)=\zeta_d^m\) is a Galois
automorphism of \(\Q(\zeta_d)\).  The last determinant equals
\[
 \sigma_m\!\left(\det(\zeta_d^{ab})_{a,b\in S}\right)=0.
\]
\end{proof}

\begin{corollary}[Eventual equivalence]\label{cor:eventual-equivalence}
The following are equivalent:
\begin{enumerate}
\item \(\Pcal(N)\) holds for every square-free \(N\);
\item there exists \(N_0\) such that \(\Pcal(N)\) holds for every
square-free \(N>N_0\).
\end{enumerate}
\end{corollary}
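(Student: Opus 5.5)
The implication (1)\(\Rightarrow\)(2) is immediate: take \(N_0=1\). The content is in (2)\(\Rightarrow\)(1), which I would prove by contraposition using Proposition~\ref{prop:upward-inheritance}. So assume (2) holds with some \(N_0\), and suppose for contradiction that some square-free \(d\) fails \(\Pcal(d)\). Then \(\F_d\) has a zero principal minor. Note that \(d\geq2\), since for \(d=1\) the only minors are \(1\) and the empty minor.

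The key step is to choose a multiplier \(m\) for which \(dm\) is square-free, exceeds \(N_0\), and satisfies \((d,m)=1\). I would take \(m=q\) to be any prime with \(q\nmid d\) and \(q>N_0\); infinitely many primes exist, so such a \(q\) is available. Then \(\gcd(d,q)=1\) and \(q\geq2\). Moreover \(dq\) is square-free, because \(d\) is square-free and \(q\) is a new prime, and \(dq>N_0\).

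Proposition~\ref{prop:upward-inheritance}, applied with this \(d\) and \(m=q\), then produces a zero principal minor of \(\F_{dq}\). This contradicts \(\Pcal(dq)\), which (2) guarantees because \(dq\) is square-free and exceeds \(N_0\). Hence \(\Pcal(d)\) holds for every square-free \(d\), which is (1).

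There is no real obstacle here. The only points to check are the hypotheses of the inheritance proposition, namely \(d,m\geq2\) and coprimality, and that the new order remains square-free; choosing \(m\) to be a large new prime settles both at once. The degenerate case \(d=1\) needs a brief remark, or can be excluded by the convention \(N\geq2\).
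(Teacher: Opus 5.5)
Your proposal is correct and follows the paper's argument: only (2)$\Rightarrow$(1) needs proof, and you take a square-free counterexample $d$ and a prime $\ell\nmid d$ with $\ell>N_0$, so that Proposition~\ref{prop:upward-inheritance} makes the square-free order $d\ell>N_0$ a counterexample, contradicting (2). Your explicit checks of $d\geq2$, coprimality, and square-freeness of $d\ell$ are the hypotheses the paper leaves implicit.
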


\begin{proof}
Only the reverse implication needs proof.  If a square-free \(d\) were a
counterexample, choose arbitrarily large primes \(\ell\nmid d\).
Proposition~\ref{prop:upward-inheritance} would make every square-free
number \(d\ell\) a counterexample, contradicting the second assertion.
\end{proof}

In fact, a counterexample \(d\) propagates to all square-free multiples of
\(d\).  Among square-free integers these multiples have positive relative
density
\[
 \prod_{p\mid d}\frac1{p+1}.
 \tag{7.7}\label{eq:relative-density}
\]
Indeed, divide the Euler product for square-free multiples of \(d\) by
\(\prod_p(1-p^{-2})\); each prescribed prime \(p\mid d\) contributes the
factor \((p^{-1}-p^{-2})/(1-p^{-2})=1/(p+1)\).
Thus even a relative-natural-density-one result within the square-free
integers would imply the complete conjecture.  Likewise, suppose that
\(f(N)\to\infty\) along square-free \(N\) and that, for every sufficiently
large square-free \(N\), all principal minors of ranks at most
\(f(N)\) are nonzero.  A rank-\(r\) counterexample at order \(d\) would
persist at every order \(d\ell\), eventually contradicting
\(f(d\ell)\geq r\).  A uniformly growing small-rank window would therefore
already settle the full conjecture.

\subsection{Open directions}

The preceding discussion suggests concrete targets rather than a single
undifferentiated computational search.

\begin{question}\label{q:higher-local}
Can a higher local invariant resolve flags satisfying the simultaneous
full-cover condition \eqref{eq:no-full-cover}, beginning with the
characteristic-\(11\) flags for the base \(10\)?
\end{question}

\begin{question}\label{q:structural-beta}
Can \(\beta(M,q)\), or the prime support of \(\Delta_M\), be bounded from
the divisor structure of \(M\), the splitting of \(q\), and the active
ranks, without enumerating all \(2^M\) carriers?
\end{question}

\begin{question}\label{q:symmetric-lifting}
Is there a symmetric lifting principle that uses several prime directions
simultaneously and remains effective for balanced square-free products?
Alternatively, can one prove a uniformly growing window of nonzero
small-rank principal minors?
\end{question}

The discrete unique-continuation and support results of Li and
collaborators
\cite{LiAnderson2D2022,LiZhang2022,LiSupport2026,LiSimplex2026}
suggest looking for propagation or rigidity mechanisms that replace
carrier-by-carrier enumeration.  Li's simplex theorem, in particular,
derives center-to-support growth from complete oriented-simplex relations,
together with nonvanishing at the balanced point.  No
embedding of the Fourier-minor confluent equations into that simplex system
is presently known, so the analogy remains motivational.  One plausible
direction is to retain higher cyclotomic or \(q\)-adic contact data after the
leading factor vanishes; the base-ten computations are compatible with this
idea but do not prove a uniform theorem.

\section*{Acknowledgments}

The exact finite computations were independently checked.  GPT-5.6 Sol
assisted with proof exploration,
consistency checking, and manuscript preparation.

\end{document}